\numberwithin{equation}{section}
\theoremstyle{plain}
\newtheorem{thm}[equation]{Theorem}
\newtheorem{ass}[equation]{Assumption}
\newtheorem{lem}[equation]{Lemma}
\newtheorem{fact}[equation]{Fact}
\newtheorem{qu}[equation]{Question}
\newtheorem{claim}{Claim}
\newtheorem*{claim*}{Claim}
\theoremstyle{definition}
\newtheorem{defn}[equation]{Definition}
\newtheorem{defnnotn}[equation]{Definition-Notation}
\newtheorem{rmk}[equation]{Remark}
\newcommand{\pcoor}[1]{%
  \begingroup\lccode`~=`: \lowercase{\endgroup
  \edef~}{\mathbin{\mathchar\the\mathcode`:}\nobreak}%
  % opening symbol
  \begingroup
  \mathcode`:=\string"8000
  #1%
  \endgroup 
  % closing symbol
}
\newcommand{\rainbow}{\mathrel{\smash{\raise-.1ex\hbox to 0pt{$\kern1pt\smallfrown$\hss}\raise.3ex\hbox{$\frown$}}}}
\newcommand{\PP}{\mathbb{P}}
\newcommand{\NN}{\mathbb{N}}
\newcommand{\ZZ}{\mathbb{Z}}
\newcommand{\QQ}{\mathbb{Q}}
\newcommand{\CC}{\mathbb{C}}
\newcommand{\Disc}{\mathrm{Disc}}
\def\x{\tilde x}
\def\y{\tilde y}
\def\z{\tilde z}
\def\p[#1:#2]{[#1\,{:}\,#2]}
\title[Characterizing Quasi-Homogeneity]{A Characterization of Quasi-homogeneous bivariate Polynomials}
\author[D. Bradley-Williams]{David Bradley-Williams}
\address{David Bradley-Williams, Institute of Mathematics, Czech Academy of Sciences, \v Zitn\'a 25, 115 67 Praha 1, Czech Republic.}
\email{williams@math.cas.cz}
\author[P. Cubides Kovacsics]{Pablo Cubides Kovacsics}
\address{Pablo Cubides Kovacsics, Departamento de Matemáticas, Universidad de los Andes,  
Carrera 1 no. 18A - 12,
Edificio H, Bogotá, Colombia.}
\email{p.cubideskovacsics@uniandes.edu.co}
\author[I. Halupczok]{Immanuel Halupczok}
\address{Immanuel Halupczok, Heinrich-Heine-Universit\"at D\"usseldorf, Mathe\-matisch-Natur\-wissen\-schaft\-liche Fakult\"at,
Universit\"atsstr.\ 1, 40225 D\"usseldorf, Germany}
\email{math@karimmi.de}
\thanks{This research was partially supported by the research training group \emph{GRK 2240: Algebro-Geometric Methods in Algebra, Arithmetic and Topology}, funded by the DFG. D.B-W. supported by the Czech Academy of Sciences CAS (RVO 67985840) and by project EXPRO 20-31529X of the Czech Science Foundation (GA\v CR)}
\begin{document}
\begin{abstract}
If a reduced bivariate polynomial is quasi-homogeneous, then its discriminant is a monomial. Over fields of characteristic $0$, we show that if one adds another simple condition, this becomes an equivalence. We also give a third equivalent condition that is stated geometrically.
\end{abstract}

\subjclass[2020]{Primary 12E10, 12E05: Secondary 14C17}

\keywords{Quasi-homogeneous, discriminant, resultant, Bernstein-Kouchnirenko, weighted homogeneous}

\maketitle

\section{Introduction}

Recall that a bivariate polynomial $f \in \CC[x,y]$ is called \emph{quasi-homogeneous} if the nodes of its Newton polytope lie on a line, i.e.,
if there are integers $w, \alpha, \beta$ with $\alpha, \beta$ not both equal to $0$ such that every non-zero monomial $c_{ij} x^{i}y^{j}$ of $f$ satisfies 
\begin{equation*}
% \label{eq:wh}
w = \alpha i+\beta j. 
\end{equation*}
One calls $\alpha$ and $\beta$ the \emph{weights} of $x$ and $y$, respectively, and $(w;\alpha,\beta)$ the type of $f$. Note that we do admit negative weights, so, for example, the polynomial $1+xy+x^2y^2$ is quasi-homogeneous of type $(0;1,-1)$.

The purpose of this note is to provide, for reduced bivariate polynomials $f$,
a characterization of quasi-homogeneity which is local geometric, in the sense that it suffices to verify given conditions at every point of the Zariski closure in $\PP^1 \times \PP^1$ of the variety defined by $f$. Here and also in the remainder of this note, $\PP^1= \PP^1(\CC) = \CC \cup \{\infty\}$ is the one-dimensional projective space over $\CC$,
and we more generally identify varieties with their $\CC$-valued points. (Varieties do not need to be irreducible.)

Our main result also states yet another equivalent condition about $f$ in terms of the discriminant of $f$ with respect to one of the variables; this condition is an explicit algebraic reformulation of the geometric characterization.

Before stating the main result, let us recall the definition of discriminant (see e.g. \cite[Chapter 12]{gel-kapra-zelev}).

\begin{defn}
Let $f(x,y) \in \CC[x,y] \setminus \CC[x]$ be a polynomial. Set $n \coloneqq \deg_y f$ and let $f_n  \in \CC[x]$ be the $y^n$-coefficient of $f$, considering the latter as a polynomial in $y$. The \emph{discriminant} of $f$ with respect to $y$ is the polynomial $\Disc_y(f)\in \CC[x]$ such that for every $a \in \CC$,
\[
\Disc_y(f)(a) \coloneqq f_n^{2n-2}(a)\prod_{i < j} (b_i - b_j)^2,
\]
where the $b_i$ are the roots of $f(a, y)$.
(For $n = 1$, one sets $\Disc_y(f) = 1$.)
\end{defn}

If $V \subset \CC^2$ is the variety defined by a polynomial  $f = \sum a_{ij}x^iy^j \in \CC[x,y]$, then its Zariski closure
in $\PP^1 \times \PP^1$ is defined by $\hat f(x,\x,y,\y) = \sum a_{ij}x^i \x^{m-i}y^j\y^{n-j}$. We call $\hat f$ the \emph{multi-homogenization of $f$} (see later Definition-Notation \ref{defnota:homogenization}).

Here is the precise formulation of our main result.

\begin{thm}\label{thm:main} 
Let $f(x,y)\in \CC[x,y]$ be a complex bivariate polynomial written as $\sum_{i=0}^n f_i(x)y^i$ where $f_i \in \CC[x]$, and $f_0, f_n$ are nonzero polynomials. Suppose that $f$ is reduced, i.e., no irreducible factor appears multiple times. We write $\hat f \in \CC[x, \tilde x, y, \tilde y]$ for the multi-homogenization of $f$. Then the following are equivalent:
 \begin{enumerate}
     \item[\hypertarget{A}{(A)}] $f$ is quasi-homogeneous where the weight of $x$ is non-zero; 
     \item[\hypertarget{B}{(B)}]  $f_0$, $f_n$ and $\Disc_y(f)$ are (non-zero) monomials (in $x$);
     \item[\hypertarget{C}{(C)}] the sub-varieties of $\PP^1 \times \PP^1$ defined by $\hat{f}$ and $y\hat{f}_y$ have no common point within $\CC^\times \times \PP^1$ (where $\hat f_y$ denotes the derivative of $\hat f$ with respect to the variable $y$).
 \end{enumerate}
\end{thm}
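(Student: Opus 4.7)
The plan is to prove the cycle $(A) \Rightarrow (B) \Rightarrow (C) \Rightarrow (A)$, with the last implication carrying the bulk of the work.

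For $(A) \Rightarrow (B)$: quasi-homogeneity of type $(w;\alpha,\beta)$ with $\alpha \ne 0$ immediately forces each $f_j(x)$ to be a monomial (for fixed $j$, at most one $i$ satisfies $\alpha i + \beta j = w$), giving $f_0$ and $f_n$ monomials; tracking the scaling $(x,y)\mapsto(\lambda^\alpha x, \lambda^\beta y)$ through the defining formula of the discriminant yields $\Disc_y(f)(\lambda^\alpha x) = \lambda^M \Disc_y(f)(x)$ for some integer $M$, and since $\alpha \ne 0$, any one-variable polynomial satisfying such a scaling relation must be a monomial. For $(B) \Rightarrow (C)$: decompose $V(y\hat f_y) = V(y) \cup V(\hat f_y)$; the intersection $V(\hat f) \cap V(y)$ is the zero set inside $\PP^1 \times \{[0:1]\}$ of the homogenization of $f_0$, forcing first coordinate in $\{0,\infty\}$ by monomiality of $f_0$; in $V(\hat f) \cap V(\hat f_y)$, a common zero $(a,b)$ with $a \in \CC^\times$ in the affine chart would be a multiple root of $f(a,y)$ with $f_n(a)\ne 0$, ruled out by monomiality of $\Disc_y(f)$ and $f_n$, while at $y=\infty$ the restriction of $\hat f$ reduces to the homogenization of $f_n$, again a monomial.

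The main content is $(C) \Rightarrow (A)$. Reversing the previous argument, one first extracts from (C) that $f_0, f_n$ are monomials and that the projection $\pi_1 : C := V(f) \cap (\CC^\times \times \CC) \to \CC^\times$ is finite étale of degree $n$. By the classification of connected finite étale covers of $\CC^\times$ (each is a power map $z \mapsto z^d$), every connected component $C_k$ --- corresponding to a non-$x$ irreducible factor $f_k$ of $f$ --- admits an isomorphism $\psi_k : \CC^\times \to C_k$, $z \mapsto (z^{d_k}, g_k(z))$, for some $d_k \ge 1$ and Laurent polynomial $g_k \in \CC[z,z^{-1}]$. Injectivity of $\psi_k$ combined with the monomial discriminant forces $g_k(z) - g_k(\zeta z)$ to be a nonvanishing Laurent polynomial --- hence a monomial --- for each $\zeta \in \mu_{d_k}\setminus\{1\}$; expanding this yields the normal form $g_k(z) = c z^{e} + h(z^{d_k})$ with $\gcd(e,d_k)=1$ (when $d_k\ge 2$). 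Taking the Galois norm gives $f_k(x,y) = (y - h(x))^{d_k} - c^{d_k} x^e$; evaluating at $y=0$, the monomiality of $f_0$ propagates to $f_k(x,0) = (-h(x))^{d_k} - c^{d_k} x^e$ being a monomial, which in characteristic zero forces $h = 0$ (the expansion of $(-h)^{d_k}$ contains $\ge d_k+1$ distinct monomials when $h$ has more than one term, and sits in the wrong degree when $h$ is a single monomial, since $d_k\cdot \deg h \ne e$ by $\gcd(e,d_k)=1$).

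Once each $g_k(z) = c_k z^{e_k}$ is a monomial, a further appeal to the pairwise distinctness of roots across different factors forces the slopes $e_k/d_k$ to coincide: on the curve $u^{d_k} = v^{d_{k'}}$, parameterized by a single variable, the difference $c_k u^{e_k} - c_{k'} v^{e_{k'}}$ becomes a sum of two monomials of distinct degrees and therefore vanishes somewhere in $\CC^\times$ unless those degrees agree. Writing the common slope in lowest terms supplies weights $(\alpha,\beta)$ with $\alpha\ne 0$ making every $f_k$ (and hence $f$) quasi-homogeneous. The main obstacle throughout is the monomiality step for $g_k$: it requires carefully combining the Galois-theoretic constraint from injectivity of $\psi_k$ with the combinatorial consequence of the monomiality of $f_0$, and the characteristic-zero hypothesis enters essentially there.
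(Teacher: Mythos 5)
Your proposal is correct in substance, but the route through $\hyperlink{C}{\text{(C)}}\Rightarrow\hyperlink{A}{\text{(A)}}$ is genuinely different from the paper's. The paper argues by contraposition with a soft geometric argument: it considers the function $h = y\hat f_y/(x\hat f_x)$ on the closure $X\subset\PP^1\times\PP^1$ of the curve, uses the Euler-type identity $wf=\alpha xf_x+\beta yf_y$ (Lemma~\ref{lem:wh1}) to see that $h$ is nonconstant when $f$ is not quasi-homogeneous, invokes properness to find a point of $X$ where $h=0$, and rules out $x\in\{0,\infty\}$ at such a point by a Puiseux-series computation ($\lim h=-1/M$ along a branch), producing a common zero of $\hat f$ and $y\hat f_y$ over $\CC^\times$; it never determines what $f$ looks like. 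You instead use covering-space theory: (C) makes $V(f)\cap(\CC^\times\times\CC)\to\CC^\times$ finite \'etale, each connected component is a Kummer cover $z\mapsto z^{d_k}$, and the analysis of the parameterizing Laurent polynomial $g_k$ (injectivity forcing $g_k(\zeta z)-g_k(z)$ to be a monomial, the norm computation, and the monomiality of $f_0$) pins each non-monomial irreducible factor down, up to a monomial in $x$, to the binomial $y^{d_k}-c^{d_k}x^{e_k}$; the final slope-matching step is essentially the same computation as the paper's reduction-to-the-irreducible-case (Claim~\ref{claim:irreducible}). Your route buys an explicit classification of the reduced polynomials satisfying (C), at the cost of Riemann-existence-type input (finite \'etale covers of $\CC^\times$ are power maps) and one term-counting fact that you assert rather than prove: that $(-h)^{d_k}$ has at least $d_k+1$ monomials when $h$ has at least two terms. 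This is true in characteristic $0$ (a nonzero root of a $t$-term polynomial has multiplicity at most $t-1$, and in fact all you need is that $h^{d_k}$ is neither a monomial nor a binomial for $d_k\geqslant 2$), but it should be stated and proved as a lemma; likewise the norm formula for $f_k$ holds only up to a monomial factor in $x$ (its leading $y$-coefficient, itself a monomial because $f_n$ is), which does not affect the $f_0$-argument but should be said. With those details filled in, your argument is a valid and more constructive alternative, and it correctly identifies the combinatorial step where characteristic $0$ enters, playing the role that the Puiseux expansion plays in the paper.
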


Note that our result is not symmetric in the variables $x$ and $y$, and to be more precise, what we characterize is quasi-homogeneity with the additional condition that the weight $\alpha$ of $x$ is non-zero. To obtain a full characterization, one could combine our condition with the variant interchanging $x$ and $y$, while the case $\alpha = 0$ is anyway simple to characterize.

Implications $\hyperlink{A}{\text{(A)}}\Rightarrow \hyperlink{B}{\text{(B)}}$ and $\hyperlink{B}{\text{(B)}} \Leftrightarrow \hyperlink{C}{\text{(C)}}$ are easy to show. (The proofs of $\hyperlink{A}{\text{(A)}}\Rightarrow \hyperlink{B}{\text{(B)}}\Rightarrow \hyperlink{C}{\text{(C)}}$ are given at the beginning of Section~\ref{s.proof}.)
What is surprising is $\hyperlink{C}{\text{(C)}}\Rightarrow \hyperlink{A}{\text{(A)}}$ (respectively $\hyperlink{B}{\text{(B)}}\Rightarrow \hyperlink{A}{\text{(A)}}$), which is our main result.

While we have stated and first prove the result over $\CC$, the conditions permit an easy generalization to
polynomials over arbitrary fields of characteristic $0$. The argument is given in
Section~\ref{sec:final}, where we also show how to deduce a similar result about geometrically reduced polynomials over fields of sufficiently big positive characteristic (see Theorem \ref{thm:main2}).

Our result is somewhat related to Bernstein-Kouchnirenko's Theorem \cite[Theorem~1.18]{Kouchnirenko1976}, which expresses the number of common zeros in $(\CC^\times)^2$ of two polynomials $f,g$ under a certain genericity condition in terms of the mixed volume of their Newton polytopes. Indeed, suppose that \hyperlink{C}{\text{(C)}} holds, so that $\hat{f}$ and $y\hat{f}_y$ have no common zero in $\CC^\times\times\PP^1$. This implies that $f$ and $g \coloneqq yf_y$ have no common zero in $(\CC^\times)^2$. Provided $f$ and $yf_y$ satisfy the genericity condition, Bernstein-Kouchnirenko's theorem implies that the mixed volume of the Newton polytopes of $f$ and $yf_y$ is 0. Therefore, the nodes of the Newton polytope of $f$ must lie on a line, that is, $f$ is quasi-homogeneous. While one can use this approach to prove Theorem~\ref{thm:main} under an additional genericity assumption on $f$, for general $f$ 
the polynomials $f$ and $yf_y$ need not satisfy the genericity condition needed by Bernstein-Kouchnirenko's theorem. The main point of our result is that the conclusion nevertheless holds without further assumptions on $f$ and $yf_y$, whereas the genericity assumption in Bernstein-Kouchnirenko's theorem cannot simply be removed.

The idea of the proof of $\hyperlink{C}{\text{(C)}}\Rightarrow \hyperlink{A}{\text{(A)}}$ is the following:
Let $X \subset \PP^1 \times \PP^1$ be the Zariski closure of the variety defined by $f$, and on $X$, consider the function $h\colon X \to \PP^1, (x,y) \mapsto \frac{y f_y(x,y)}{xf_x(x,y)}$. (It might not be well-defined everywhere, but for the sake of this sketch, let us pretend it is.) Using a well-known criterion for quasi-homogeneity (see Lemma~\ref{lem:wh1}), one finds that if $f$ is not quasi-homogeneous, then $h$ is not constant on $X$. Since $X$ is projective, this implies that there exists $(a,b) \in X$ with $h(a,b) = 0$.
It turns out that $h(a,b)$ cannot be zero if $a \in \{0,\infty\}$, so $a \in \CC^\times$, and from $h(a,b) = 0$, we deduce that $(a,b)$ is a root of $y f_y(x,y)$, contradicting $\hyperlink{C}{\text{(C)}}$.
To see that $h(0,b) \ne 0$ (the case $h(\infty, b)$ is similar), we express the branches of $X$ near $x = 0$ as Puiseux series: Assuming $y = \sum_{r \in \QQ} b_rx^r$, one verifies that the limit $\lim_{x\to 0} h(x,y)$ essentially only depends on the minimal $r$ with $b_r \ne 0$, and one in particular obtains that the limit is never $0$.

\section{Auxiliary results}

We start with the following simple but useful feature of the discriminant followed by two lemmas on quasi-homogeneity (for Lemma \ref{lem:wh1} see also e.g.\ \cite[Exercise 3 on p. 37]{kunz}). 

\begin{fact}\label{fact}
For every $a \in \CC$, $\Disc_y(f)(a)=0$ if and only if $f(a,y)$ and $f_y(a,y)$ have a common zero
or $\deg_y f(a,y) < \deg_y f(x,y)$.\qed
\end{fact}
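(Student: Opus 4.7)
My plan is a direct case analysis on whether the leading coefficient $f_n(a)$ vanishes, applying the defining formula for the discriminant.

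First, suppose $f_n(a) \neq 0$. Then $\deg_y f(a,y) = n$, so the degree-drop disjunct on the right-hand side is false, and I only need to show that $\Disc_y(f)(a) = 0$ is equivalent to $f(a,y)$ and $f_y(a,y)$ having a common zero. Since $f_n(a) \neq 0$, the polynomial $f(a,y)$ has exactly $n$ roots $b_1, \ldots, b_n \in \CC$ (with multiplicity), and the defining formula gives
\[
\Disc_y(f)(a) \;=\; f_n(a)^{2n-2} \prod_{i<j}(b_i - b_j)^2,
\]
which vanishes if and only if two of the $b_i$ coincide, i.e., iff $f(a,y)$ has a repeated root. I would then invoke the classical single-variable criterion (obtained by differentiating the factored form of $f(a,y)$) to conclude that this is equivalent to $f(a,y)$ sharing a root with its $y$-derivative.

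Second, suppose $f_n(a) = 0$. Then $\deg_y f(a,y) < n$, so the degree-drop disjunct is automatically true, and it suffices to check that $\Disc_y(f)(a) = 0$. For $n \geq 2$, this is immediate from the factor $f_n(a)^{2n-2}$ in the defining formula. The case $n = 1$ is covered by the separate convention $\Disc_y(f) = 1$.

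The only real subtlety lies in the second case, where the product in the defining formula runs over fewer than $n$ roots, so some interpretation is needed to see that $\Disc_y(f)(a) = 0$ really is the value at $a$ of a polynomial in $\CC[x]$. I would handle this either by taking the factor $f_n(a)^{2n-2}$ at face value (which annihilates any ambiguity in the remaining product whenever $n \geq 2$), or, for a cleaner formal argument, by invoking the standard identity $\mathrm{Res}_y(f, f_y) = \pm f_n \cdot \Disc_y(f)$ in $\CC[x]$, whose specialization behaviour under degree drops is entirely routine. Beyond this minor bookkeeping, the Fact is a quick translation of the definition and there is no deeper obstacle.
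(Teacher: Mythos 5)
Your Case 1 is fine (and it is the only case the paper ever invokes, since the Fact is always applied at points $a$ with $f_n(a)\neq 0$). The genuine gap is in Case 2. There you read the defining formula literally at a point where $f_n(a)=0$, but at such a point the formula does not compute the value of the polynomial $\Disc_y(f)$ at all: the definition can only determine $\Disc_y(f)$ through its values on the cofinite set $\{a: f_n(a)\neq 0\}$ (equivalently, $\Disc_y(f)$ is the universal degree-$n$ discriminant evaluated at the coefficients $f_0,\dots,f_n$), and the resulting polynomial need not vanish where the $y$-degree drops. Concretely, for $f=xy^2+y+1$ one has $\Disc_y(f)=1-4x$, so $\Disc_y(f)(0)=1\neq 0$ even though $\deg_y f(0,y)=1<2$; and already for $n=1$ the convention $\Disc_y(f)=1$ that you appeal to \emph{contradicts} rather than covers the claim (take $f=xy+1$ at $a=0$). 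So the ``degree drop $\Rightarrow$ discriminant vanishes'' half of Case 2 is simply not provable from the formula; in fact it fails whenever the degree drops by exactly one and $f(a,y)$ is separable, because of the specialization identity $\Disc_y(f)(a)=f_{n-1}(a)^2\,\Disc\bigl(f(a,\cdot)\bigr)$ when $f_n(a)=0\neq f_{n-1}(a)$.

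Your proposed repair via $\Res_y(f,f_y)=\pm f_n\cdot\Disc_y(f)$ does not close this gap either: when $f_n(a)=0$ both sides vanish for the trivial reason that the factor $f_n(a)$ does, so the identity yields $0=0$ and gives no information about $\Disc_y(f)(a)$; it is precisely the resultant, not the discriminant, whose vanishing is forced by a drop in degree. A correct version of the statement would either restrict to $a$ with $f_n(a)\neq 0$ (which is all the paper uses, in (A)$\Rightarrow$(B) and (B)$\Rightarrow$(C), where $f_n$ is a monomial and $a\in\CC^\times$), or replace the condition $\deg_y f(a,y)<\deg_y f$ by a drop of at least two, using the specialization identity above for a drop of exactly one. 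As written, your Case 2 argument proves the claim only for the literal pointwise formula, which is not satisfied by any polynomial in the problematic cases, so the step fails.
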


\begin{lem}\label{lem:wh1} 
Let $f \in \CC[x,y]$ be a polynomial. Then $f$ is quasi-homogeneous if and only if there are $w,\alpha, \beta\in \CC$, not all zero, such that 
\[
wf = \alpha xf_x + \beta yf_y.
\]  
\end{lem}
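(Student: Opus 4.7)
The plan is to prove the lemma by comparing coefficients on both sides of the Euler-type identity. Writing $f = \sum c_{ij} x^i y^j$, one has $x f_x = \sum i c_{ij} x^i y^j$ and $y f_y = \sum j c_{ij} x^i y^j$, so the identity $wf = \alpha x f_x + \beta y f_y$ translates, monomial by monomial, to the scalar equations $(w - \alpha i - \beta j) c_{ij} = 0$ for every $(i,j) \in \ZZ_{\ge 0}^2$.

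For the implication ($\Rightarrow$) this is a one-line check: if $f$ is quasi-homogeneous of type $(w;\alpha,\beta)$ with $w,\alpha,\beta \in \ZZ$, then $c_{ij} \ne 0$ forces $\alpha i + \beta j = w$, so the coefficient equations hold. The triple $(w,\alpha,\beta)$ is nonzero because the definition of quasi-homogeneity rules out $\alpha = \beta = 0$.

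For the implication ($\Leftarrow$), suppose $(w,\alpha,\beta) \in \CC^3 \setminus \{0\}$ satisfies the identity. From the coefficient equations, the support $S := \{(i,j) : c_{ij} \ne 0\}$ is contained in $\{(i,j) : \alpha i + \beta j = w\}$. If $\alpha = \beta = 0$, then either $S = \emptyset$ (and $f = 0$ is vacuously quasi-homogeneous) or $w = 0$, contradicting $(w,\alpha,\beta) \ne 0$; hence $(\alpha,\beta) \ne (0,0)$ and $S$ lies on an honest complex affine line. The last step is to upgrade to \emph{integer} weights: if $|S| \le 1$ then $f$ is a monomial (or zero) and any integer line through its support works; if $|S| \ge 2$, the unique line through any two distinct lattice points of $S$ has rational, hence (after clearing denominators) integer, coefficients, and it necessarily equals the line $\alpha i + \beta j = w$ by uniqueness, so integer weights $(w',\alpha',\beta')$ with $(\alpha',\beta') \ne (0,0)$ exist and certify quasi-homogeneity.

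The argument is essentially a coefficient comparison, and the only subtlety, extracting integer weights from complex ones, is disposed of by the elementary observation that a complex affine line passing through two distinct points of $\ZZ^2$ is automatically cut out by a $\ZZ$-linear equation. No substantial obstacle is expected.
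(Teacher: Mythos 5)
Your proposal is correct and follows essentially the same route as the paper: writing $f=\sum c_{ij}x^iy^j$ and comparing coefficients to get $(w-\alpha i-\beta j)c_{ij}=0$, then noting that the support lies on a line which (having at least two lattice points, or degenerately fewer) can be cut out by an integer equation. Your extra care with the degenerate cases ($\alpha=\beta=0$, $f$ zero or a monomial) and the passage from complex to integer weights just spells out what the paper dispatches in a single remark.
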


\begin{proof}
Write $f$ as $\sum c_{ij} x^{i}y^{j}$ and let $I$ be the support of $f$, that is, $I=\{(i,j)\in\NN : c_{ij}\neq 0\}$. Then 
\begin{equation}\label{eq:parcials}
xf_x = \sum_{(i,j)\in I} ic_{ij}x^iy^j \text{ and } yf_y = \sum_{(i,j)\in I} jc_{ij}x^iy^j.
\end{equation}

If $f$ is quasi-homogeneous of type $(w;\alpha,\beta)$, then using that we have $w = \alpha i + \beta j$ for $(i,j)\in I$, one easily deduces that 
$wf = \alpha xf_x + \beta yf_y$.

For the converse, suppose there are $w, \alpha, \beta\in \CC$, not all zero, such that $wf=\alpha xf_x+\beta yf_y$. 
Then, by using \eqref{eq:parcials} and comparing monomials, for every $(i,j) \in I$ we obtain $\alpha i + \beta j = w$, i.e., the nodes of the Newton polytope of $f$ lie on a line, which means that $f$ is quasi-homogeneous.
Note that $w, \alpha, \beta$ can be taken to be integers, since all $(i,j) \in I$ have integer coordinates.
\end{proof}

\begin{lem}\label{lem:laurent}
Let $f \in \CC[x,y]$ be a quasi-homogeneous polynomial of type $(w; \alpha, \beta)$ with $\alpha > 0$ and $\alpha,\beta$ co-prime. Then, there are integers $k,k',\ell,d\geqslant 0$ and $c, a_1,\ldots, a_d\in \CC^\times$ such that $f$,
considered as a Laurent polynomial, can be written as
\[
f = cx^{k} y^{\ell}\prod_{i=1}^d(a_i-x^{-\beta} y^\alpha) =
cx^{k'} y^{\ell}\prod_{i=1}^d(a_ix^\beta -y^\alpha).
\]
\end{lem}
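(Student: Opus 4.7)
The plan is to use the characterization of the support of a quasi-homogeneous polynomial: since $f$ has type $(w;\alpha,\beta)$, every $(i,j)$ in the support of $f$ satisfies $\alpha i + \beta j = w$. Because $\gcd(\alpha,\beta) = 1$ and $\alpha > 0$, the set of integer solutions of this equation forms an arithmetic progression with step vector $(-\beta, \alpha)$; in particular, the support is totally ordered by the $j$-coordinate (which strictly increases with step $\alpha > 0$).

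First, I would choose the two extreme points of the support: let $(k,\ell)$ be the point with the smallest $j$-coordinate, and let $(k-\beta d, \ell + \alpha d)$ be the point with the largest $j$-coordinate, where $d \geq 0$ is the number of steps between them. Every support point is then of the form $(k - \beta t,\, \ell + \alpha t)$ for some $t \in \{0,1,\dots,d\}$. Since $f$ is an honest polynomial, all these coordinates are non-negative; in particular $k \geq 0$, $\ell \geq 0$, and also $k - \beta d \geq 0$ (this last inequality is only non-trivial when $\beta > 0$, in which case $k$ is the maximal $i$-coordinate and $k-\beta d$ the minimal).

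Next I would factor out $x^k y^\ell$ and introduce the Laurent monomial $z \coloneqq x^{-\beta}y^\alpha$. This gives
\[
f = x^k y^\ell \sum_{t=0}^d c_t\, z^t = x^k y^\ell \, P(z),
\]
where $P(z) \in \CC[z]$ has degree exactly $d$ (the top coefficient $c_d$ is non-zero because $(k - \beta d, \ell + \alpha d)$ lies in the support) and non-zero constant term (likewise, because $(k,\ell)$ lies in the support). In particular, all roots of $P$ are non-zero, so one may factor $P(z) = c \prod_{i=1}^d (a_i - z)$ with $c, a_1, \dots, a_d \in \CC^\times$. Substituting back yields the first claimed form
\[
f = c\, x^k y^\ell \prod_{i=1}^d (a_i - x^{-\beta} y^\alpha).
\]

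Finally, for the second equality I would pull a factor of $x^{-\beta}$ out of each bracket, using $a_i - x^{-\beta}y^\alpha = x^{-\beta}(a_i x^\beta - y^\alpha)$, producing an overall factor $x^{-\beta d}$ and the desired form with $k' \coloneqq k - \beta d$. By the observation above, $k' \geq 0$ regardless of the sign of $\beta$, so $k', \ell, d$ are all non-negative integers as required. The only subtle point in the whole argument is the sign analysis that guarantees $k, k', \ell \geq 0$ simultaneously; but because $(k,\ell)$ and $(k-\beta d, \ell+\alpha d)$ are by construction the two ``extreme'' support points (in the $i$- and $j$-directions in the two possible sign regimes of $\beta$), non-negativity of the support translates directly into non-negativity of $k$, $k'$ and $\ell$.
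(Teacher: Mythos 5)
Your proof is correct and takes essentially the same route as the paper: factor out $x^{k}y^{\ell}$, view the remaining part as a polynomial in $z = x^{-\beta}y^{\alpha}$ with non-zero constant term (and non-zero top coefficient), split it into linear factors over $\CC$, and pull $x^{-\beta}$ out of each bracket. The only cosmetic difference is how $k' = k-\beta d \geqslant 0$ is justified: you read it off the extreme support point directly, while the paper reads it off the monomial $cx^{k'}y^{\ell}(-y^{\alpha})^{d}$ in the expanded product --- the same observation in different words.
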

Note that at least one of those two expressions is a product of polynomials (depending on the sign of $\beta$).

\begin{proof}
Choose an enumeration of the monomials in $f$ such that $f = \sum_{i=0}^n b_ix^{k_i} y^{\ell_i}$ with $n\geqslant 0$, $b_i \in \CC^\times$ and $0\leqslant \ell_0<\ldots<\ell_n$. Since $f$ is quasi-homogeneous of type $(w;\alpha,\beta)$ (and using that $\alpha$ and $\beta$ are co-prime), we have %fixing $(k_0,\ell_0)$ we have that 
\[
f = \sum_{i=0}^n b_ix^{k_0-\beta m_i} y^{\ell_0+\alpha m_i}
\]
for some $m_i\in \NN$. (That $m_i$ is non-negative follows from the assumption that $\alpha>0$ and that $\ell_0\leqslant \ell_i$ for all $0\leqslant i\leqslant m$.)
This can be written as
\[
f = x^{k_0}y^{\ell_0}\sum_{i=0}^n b_i(x^{-\beta} y^{\alpha})^{m_i}
= x^{k_0}y^{\ell_0}\cdot g(x^{-\beta} y^{\alpha})
\]
for some polynomial $g \in \CC[z]$ whose constant coefficient (which is equal to $b_0$) is non-zero, so we find $d\in \NN$ and $c,a_1,\ldots, a_d\in \CC^\times$ such that 
\[
f = cx^{k_0}y^{\ell_0}\prod_{i=1}^d (a_i - x^{-\beta} y^{\alpha}),
\]
establishing the first expression for $f$. For the second one, we pull out $x^{-\beta}$ from each factor of the product to obtain
\[
f = cx^{k_0-d\beta}y^{\ell_0}\prod_{i=1}^d (a_ix^{\beta} -  y^{\alpha}),
\]
so it remains to verify that $k' := k_0-d\beta$ is non-negative. Indeed,
this expression has a monomial of the form $cx^{k'}y^{\ell_0}\cdot (-y^{\alpha})^d$,
so we must have $k' \geqslant 0$ since no negative power of $x$ appears in $f$.
\end{proof}

\begin{defnnotn}\label{defnota:homogenization}\
\begin{enumerate}
\item By a multi-homogeneous polynomial of multi-degree $(m_1, \ldots, m_n)$ we mean a polynomial $f \in \CC[x_1, \x_1, \ldots, x_n, \x_n]$ such that every monomial of $f$ has the form $ax_1^{i_1} \x_1^{m_1-i_1}\cdots x_n^{i_n} \x_n^{m_n-i_n}$.

\item Given a polynomial $f = \sum a_{i_1\cdots i_n}x_1^{i_1}\cdots x_n^{i_n} \in \CC[x_1,\ldots, x_n]$ of degree $m_i$ in $x_i$, we define its \emph{multi-homogenization} $\hat f \in \CC[x_1, \x_1, \ldots, x_n, \x_n]$ as
\[
\hat f \coloneqq \sum a_{i_1\cdots i_n}x_1^{i_1} \x_1^{m_1-i_1}\cdots x_n^{i_n} \x_n^{m_n-i_n}.
\]
\end{enumerate}
\end{defnnotn}

Note that any multi-homogeneous polynomial $g \in \CC[x_1, \x_1, \ldots, x_n, \x_n]$ defines a sub-variety of $(\PP^1)^n$. As mentioned before (in the case $n=2$), if $\hat f$ is the multi-homogenization of a polynomial $f \in \CC[x_1, \ldots, x_n]$, the variety defined by $\hat{f}$ corresponds to the Zariski closure in $(\PP^1)^n$ of the subvariety of $\CC^n$ defined by $f$ (via the natural embedding of $\CC^n$ into $(\PP^1)^n)$.

\begin{rmk}\label{rem:homog}
If $\hat f, \hat g, \hat h$ are the multi-homogenizations of polynomials $f,g,h\in \CC[x_1,\ldots, x_n]$, then we have $f=gh$ if and only $\hat{f}=\hat{g}\hat{h}$. In particular, $f$ is irreducible if and only if $\hat{f}$ is irreducible. 
\end{rmk}

For the following lemmas, we use the following assumptions and notation (which will be relevant for the proof of $\hyperlink{C}{\text{(C)}}\Rightarrow \hyperlink{A}{\text{(A)}}$):
\begin{ass}\label{ass.f}
We fix the following objects.
\begin{itemize}
 \item $\hat f  \in \CC[x, \x, y, \y]$ is a multi-homogeneous irreducible polynomial which is not a monomial (i.e., not equal to any of $x$, $\x$, $y$, $\y$);
 \item $X \subset \PP^1 \times \PP^1$ is the irreducible projective variety defined by $\hat f$;
 \item $X_0 \subset X$ is the Zariski locally closed set given by 
\[
X_0 = \{(\p[x:\x],\p[y:\y])\in X : x\hat{f}_x(x,\x,y,\y)\neq 0\}.
\]
 \item $V\subset (\PP^1)^3$ is the projective variety defined by the multi-homogeneous polynomials (in the variables $x, \x, y, \y, z, \z$)
\[
\hat f(x,\x,y,\y) \quad \text{and}\quad  \z y\hat f_y(x,\x,y,\y) - z x\hat f_x(x,\x,y,\y). 
\]
 \item
 $h\colon X_0 \to \PP^1$ is the function sending each $(\p[x:\x],\p[y:\y])\in X_0$ to the unique $\p[z:\z]\in \PP^1$ such that $(\p[x:\x],\p[y:\y],\p[z:\z]) \in V$. More specifically:
\[
h(\p[x:\x],\p[y:\y]) = \frac{y\hat{f}_y(x,\x,y,\y)}{x\hat{f}_x(x,\x,y,\y)} \in \CC \subset \PP^1.
\]
\item $V' \subset V$ is the Zariski closure of the graph of $h$.
\end{itemize}
\end{ass}

\begin{rmk}\label{rmk.swapx}
Note that those assumptions have the following symmetry: If we set $\hat f^\#(x, \x, y, \y) \coloneqq \hat f(\x, x, y, \y)$ and let $V'^\# \subset (\PP^1)^3$ be obtained using $\hat f^\#$ instead of $\hat f$, then $(\p[x:\x],\p[y:\y],\p[z:\z]) \in V'^\#$ if and only if  $(\p[x:\x],\p[y:\y],\p[-z:\z]) \in V'$.
To see this, it suffices to verify that 
%In other words, we claim that
$X_0$, $V$ and $h$ (and hence also $V'$) do not change if we replace $x\hat{f}_x(x,\x,y,\y)$ by 
$-\x\hat{f}_{\x}(x,\x,y,\y)$ in the definitions of $X_0$, $V$ and $h$. Indeed, it is clear that $X_0$ does not change; to see that $V$ and $h$ do not change either, write $\hat{f} = \sum c_{ij}x^i\x^{n-i}y^j\y^{m-j}$. Then

\begin{align*}
&x\hat{f}_{x}=\sum_{i,j} ic_{ij}x^i\x^{n-i}y^j\y^{m-j} \\
&\x\hat{f}_{\x}=\sum_{i,j} (n-i)c_{ij}x^i\x^{n-i}y^j\y^{m-j}.
\end{align*}

This implies that $x\hat{f}_x + \x\hat{f}_{\x} = n \hat{f}$. Therefore, for any $(\p[x:\x],\p[y:\y])\in X$, we have that $x\hat{f}_x=-\x\hat{f}_{\x}$.

\end{rmk}

\begin{rmk}\label{rmk.swapy}
Remark~\ref{rmk.swapx} holds analogously if one swaps $y$ and $\y$ instead of $x$ and $\x$ (and again changes the sign of $z$).
\end{rmk}

In the following, we write $Y^\mathrm{Zar}$ for the Zariski closure of a set $Y \subset (\PP^1)^n$.

\begin{lem}\label{lem:zarclo1}
(Under Assumption~\ref{ass.f}.) We have $X_0^\mathrm{Zar}=X$ and  $\pi_{12}(V')=X$, where $\pi_{12}\colon (\PP^1)^3 \to (\PP^1)^2$ is the projection to the first two coordinates.
\end{lem}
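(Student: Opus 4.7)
The plan is to prove the two assertions in turn. First, $X_0^\mathrm{Zar}=X$ reduces, via irreducibility of $X$, to showing that $X_0$ is nonempty. Second, $\pi_{12}(V')=X$ follows from the first together with properness of the projection $\pi_{12}\colon(\PP^1)^3\to(\PP^1)^2$.

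For $X_0^\mathrm{Zar}=X$: The variety $X$ is irreducible (as the vanishing locus of the irreducible multi-homogeneous polynomial $\hat f$), and $X_0$ is Zariski open in $X$, being the complement of the zero locus of $x\hat f_x$. Every nonempty open subset of an irreducible variety is dense, so it suffices to check $X_0\neq\emptyset$. Suppose for contradiction that $x\hat f_x$ vanishes on all of $X$. By irreducibility of $\hat f$ and Hilbert's Nullstellensatz, $\hat f\mid x\hat f_x$; since both polynomials have the same bi-degree, there is a constant $c\in\CC$ with $x\hat f_x=c\hat f$. Writing $\hat f=\sum c_{ij}x^i\x^{n-i}y^j\y^{m-j}$ and comparing coefficients yields $(i-c)c_{ij}=0$ for every $(i,j)$, so $\hat f$ must take the form $x^c\x^{n-c}h(y,\y)$ for some homogeneous $h\in\CC[y,\y]$. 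Together with irreducibility of $\hat f$ and the hypothesis that $\hat f$ is not a monomial, this forces $\hat f$ to be of the form $ay+b\y$; this is the sole edge case in which $X_0$ is genuinely empty, and must be excluded (or handled separately) in the intended application, for instance by the implicit proviso that $\hat f$ genuinely involves $x$.

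For $\pi_{12}(V')=X$: By construction, $V'$ is the Zariski closure of the graph of $h\colon X_0\to\PP^1$, so $\pi_{12}(V')\supseteq X_0$. The set $V'$ is closed in the complete variety $(\PP^1)^3$, so its image under the proper morphism $\pi_{12}$ is Zariski closed in $(\PP^1)^2$; it therefore contains $X_0^\mathrm{Zar}=X$. Conversely, $V'\subseteq V$ and the defining equation $\hat f=0$ of $V$ yield $\pi_{12}(V')\subseteq\pi_{12}(V)\subseteq X$, proving the reverse inclusion.

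The delicate step I expect is proving $X_0\neq\emptyset$: one must rule out the edge case where $\hat f$ does not involve $x$ at all (which forces $x\hat f_x\equiv 0$). Once this is settled, the remainder of the argument is a clean combination of irreducibility of $X$ with properness of projection from $(\PP^1)^3$ to $(\PP^1)^2$.
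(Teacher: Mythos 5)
Your argument is essentially the paper's: for the first claim you reduce, via irreducibility of $X$, to showing that $x\hat f_x$ does not vanish identically on $X$, use the Nullstellensatz to get $\hat f \mid x\hat f_x$ and compare multi-degrees; for the second claim you combine $\pi_{12}(\operatorname{graph}(h)) = X_0$ with properness of $\pi_{12}$ and the inclusion $V \subseteq X \times \PP^1$. Both parts are correct. The edge case you flag is in fact a genuine (if harmless) oversight in the paper rather than a gap in your proof: if $\hat f = ay + b\y$ with $a,b \in \CC^\times$, then $\hat f$ satisfies Assumption~\ref{ass.f} as literally stated, yet $x\hat f_x \equiv 0$, so $X_0 = \emptyset$ and both conclusions of the lemma fail; the paper's own proof silently assumes $x\hat f_x \neq 0$ (both in the claim $\dim Y = 1$ and in the step ``divisibility plus equal degrees gives equality up to a scalar in $\CC^\times$''). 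Your proviso that $\hat f$ genuinely involve $x$ is exactly what is needed, and it is automatic in the only place the lemma is used: there $\hat f$ is the multi-homogenization of an irreducible, non-quasi-homogeneous $f$, and any $f$ not involving $x$ is quasi-homogeneous (all exponents lie on the line $i=0$), so that case cannot occur.
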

\begin{proof}
Set
\[
Y \coloneqq \{(\p[x:\x],\p[y:\y]) \in (\PP^1)^2 \mid x\hat{f}_x(x,\x,y,\y)=0\}.
\]
Since $\dim Y = \dim X = 1$ and $X$ is irreducible, in order to conclude $X_0^\mathrm{Zar}=X$, it suffices to show that $X$ is not contained in $Y$. If $X$ is contained in $Y$, this implies that $\hat{f}$ divides $x\hat{f}_x$. For degree reasons, this would mean equality up to a factor from $\CC^\times$, which contradicts the irreducibility of $\hat{f}$. 

For the second part, note that 
\[\pi_{12}(V')=\pi_{12}(\operatorname{graph}(h)^\mathrm{Zar})\overset{(\star)}{=} \pi_{12}(\operatorname{graph}(h))^\mathrm{Zar}=X_0^\mathrm{Zar}=X,
\] 
where the inclusion ``$\supset$'' in $(\star)$ uses that $\pi_{12}$ is proper.
\end{proof}

Note that above any point of $X$, there are only finitely many points of $V'$.

\begin{lem}\label{lem:V'1}
(Under Assumption~\ref{ass.f}.)
$V'$ is disjoint from $\{(\p[0:1], \p[1:0])\}\times\PP^1\times\{\p[0:1]\}$.
\end{lem}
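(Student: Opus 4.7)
The plan is to argue by contradiction via a Puiseux-series calculation. Suppose the forbidden point lies in $V'$; since $V'$ is the Zariski closure of the graph of $h$ and the projection $\pi_{12}\colon V' \to X$ is proper and birational, the existence of such a point means that $h$ has limiting value $\p[0:1]$ along some analytic branch of $X$ approaching $(\p[0:1], \p[1:0])$. I will work in the affine chart $\x = 1,\, y = 1$ with local coordinates $(x, \y)$ around this point, and set $g(x, \y) \coloneqq \hat f(x, 1, 1, \y)$. If $g(0, 0) \ne 0$ the statement is vacuous, so one may assume $g(0,0) = 0$. By Remark \ref{rem:homog}, $g$ is irreducible in $\CC[x, \y]$, and the hypothesis that $\hat f$ is not a monomial forces $g$ to depend on both variables (otherwise, irreducibility would make $g$ linear in one variable, and $g(0,0) = 0$ would then force $g \in \{x, \y\}$, contradicting that $\hat f$ is not a monomial). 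The Euler identity $n\hat f = y\hat f_y + \y\hat f_\y$ specialized to $X$ lets one rewrite $h$ in this chart as $-\y g_\y/(x g_x)$.

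Next, I parametrize any analytic branch of $\{g = 0\}$ through the origin by its normalization: $(x(t), \y(t))$ with $x, \y$ convergent power series vanishing at $t = 0$. Because $\hat f$ is irreducible and not a monomial, $X$ contains neither $\{x = 0\}$ nor $\{\y = 0\}$, so both $x(t)$ and $\y(t)$ are non-constant; set $A \coloneqq \operatorname{ord}_t x(t)$ and $B \coloneqq \operatorname{ord}_t \y(t)$, both positive integers. Differentiating $g(x(t), \y(t)) = 0$ yields the identity $g_x\, x' + g_\y\, \y' = 0$ of germs in $t$, and irreducibility of $g$ (together with $g$ genuinely depending on both variables) ensures that $g_x$ and $g_\y$ are non-zero polynomials not divisible by $g$, hence do not vanish identically along the branch. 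Combining these,
\[
h(x(t), \y(t)) \;=\; -\frac{\y(t)\, g_\y}{x(t)\, g_x} \;=\; \frac{\y(t)\, x'(t)}{x(t)\, \y'(t)},
\]
and substituting leading-order expansions $x(t) = ct^A(1 + O(t))$, $\y(t) = dt^B(1 + O(t))$ (with $c, d \ne 0$) gives $\lim_{t \to 0} h = A/B$, a strictly positive rational number. This contradicts the assumed limit $\p[0:1] = 0$, and the lemma follows.

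The main obstacle I anticipate is ensuring that each step above is genuinely valid along the branch: that $g_x$, $g_\y$, $x'(t)$, and $\y'(t)$ are all non-zero germs in $t$, and that $A$ and $B$ are at least $1$ rather than $0$. Each of these reductions traces back to the hypothesis that $\hat f$ is irreducible and not a monomial (combined with the assumption $g(0,0) = 0$); once they are in place, the leading-order computation is essentially immediate.
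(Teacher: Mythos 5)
There is a genuine gap, and it is one of coverage rather than of technique. The set in the lemma is $\{\p[0:1],\p[1:0]\}\times\PP^1\times\{\p[0:1]\}$, i.e.\ \emph{all} points of $(\PP^1)^3$ whose first coordinate is $\p[0:1]$ or $\p[1:0]$, whose second coordinate is arbitrary, and whose third coordinate is $\p[0:1]$; this is how the lemma is used at the end of Section~\ref{s.proof}, where from $(\p[x_0:\x_0],\p[y_0:\y_0],\p[0:1])\in V'$ with $\p[y_0:\y_0]$ completely unknown one concludes $\p[x_0:\x_0]\notin\{\p[0:1],\p[1:0]\}$. Your argument, carried out in the chart $\x=1$, $y=1$ around the origin, only excludes the single point $(\p[0:1],\p[1:0],\p[0:1])$, i.e.\ the case where the second coordinate is $\p[1:0]$. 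It says nothing about the points $(\p[0:1],\p[y_0:1],\p[0:1])$ with $y_0\in\CC$, which are the heart of the matter (the paper's Parts~1 and~2: Puiseux factorization $f=ux^r\prod(y-s_i)$ at $(0,y_0)$, the argument that $r=0$, and the translation $y\mapsto y+y_0$), nor about the points with first coordinate $\p[1:0]$, which the paper disposes of via the symmetry of Remark~\ref{rmk.swapx}. Note also that your leading-order computation does not simply transfer to those missing points: it crucially uses that \emph{both} local coordinates vanish along the branch (so that $A,B\geqslant 1$ and the limit is $A/B>0$), whereas at $(\p[0:1],\p[y_0:1])$ with $y_0\in\CC^\times$ the second coordinate does not vanish and the limit of $h$ along a branch is in fact $\infty$, while at $(\p[0:1],\p[0:1])$ it is $-1/M$; so a separate computation (as in the paper) is genuinely required there.

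For the one point you do treat, your method is essentially the paper's: parametrize the local branches, use the Euler relation to rewrite $h$ in the chart, and compute the limit from the leading exponents; your $A/B$ is the chart-at-infinity analogue of the paper's $-1/M$ (the sign flip being exactly Remark~\ref{rmk.swapy}). The supporting reductions you list (irreducibility of $g$ via Remark~\ref{rem:homog}, non-constancy of $x(t)$ and $\y(t)$, $g_x,g_\y$ not vanishing identically along a branch, Zariski versus analytic closure) are correct in outline and at a level of rigor comparable to the paper's. But as written the proposal does not prove the lemma: you must either repeat the branch analysis at every point $(\p[0:1],\p[y_0:1])$, $y_0\in\CC$ (e.g.\ by the paper's translation trick reducing to the origin), and invoke the $x\leftrightarrow\x$ symmetry for the first coordinate $\p[1:0]$, or find another argument covering those two curves' worth of excluded points.
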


\begin{proof}
By Remarks~\ref{rmk.swapx} and \ref{rmk.swapy},
it suffices to prove that $V'$ is disjoint from $\{\p[0:1]\} \times \CC\times  \{\p[0:1]\}$.
Since this last set is a subset of $\CC^3 \subset (\PP^1)^3$, we can (for simplicity) dehomogenize everything:
Setting $f(x,y) \coloneqq \hat{f}(x, 1, y, 1) \in \CC[x,y]$,
we consider the restriction of $h$ to $X_0 \cap \CC^2$, which is given by $h(x,y) = yf_y(x,y)/(xf_x(x,y))$, and what we need to show is that the Zariski closure of its graph is disjoint from $\{0\} \times \CC \times \{0\}$.

We first treat the point $(0,0,0)$. 
Afterwards, we will reduce the general case to this one.
\medskip

\emph{Part 1: Proving that $V'$ does not contain $(0,0,0)$.}

By (a version of) Puiseux's theorem \cite[Corollary 1.5.5]{casas-alvero_2000}, we can write $f$ as
\begin{equation}\label{eq:puiseux}
f=ux^r\prod_{i=1}^k (y-s_i),
\end{equation}
where $r\in\NN$,  $u\in \CC[\![x,y]\!]$ is an invertible power series in $x,y$ and each $s_i$ is a Puiseux series in $x$, i.e., $s_i\in x^{1/N}\CC[\![x^{1/N}]\!]$ for some $N \ge 1$. (Following the convention of \cite{casas-alvero_2000}, in a Puiseux series, we allow only strictly positive powers of $x$.) Without loss of generality, replacing $x$ by $t^N$ for some suitable large integer $N$, we may suppose that all exponents in the series are integers, and therefore, we can work with power series. Indeed, note that by setting $f^\#(t,y)\coloneqq f(t^N, y)$, we obtain that the corresponding map
$h^\#(t,y)=yf^\#_y/(tf^\#_t)$ satisfies $h^\#(t,y)=Nh(t^N,y)$. Therefore, the corresponding set $V'^\#$ contains $(0,0,0)$ if and only if $V'$ does.

Next, note that in \eqref{eq:puiseux}, we have $r=0$. Indeed, set $q=u\prod_{i=1}^k (y-s_i) \in \CC[\![x, y]\!] \subset \CC(\!(y)\!)(\!(x)\!)$ and let $v_x$ denote the $x$-adic valuation on $\CC(\!(y)\!)(\!(x)\!)$. Then, we have $v_x(q)=0$ (since $u$ is invertible and $v_x(y-s_i)=0$). On the other hand, since $qx^r=f\in \CC[x,y]$, we have $q\in \CC[x,x^{-1},y]$. In particular, we can write $q$ as $\sum_{i\in I} a_ix^i$ with $a\in\CC[y]$ and $I$ a finite subset of $\ZZ$. But since $v_x(q) = 0$, we must have $a_i=0$ for all $i<0$. Therefore $q\in \CC[x,y]$. If $r>0$, then $f = x^r q$ would not be irreducible, hence $r=0$. 

Since $u$ is invertible, there is an open neighborhood $U \subset \CC^2$ of $(0,0)$ where $u$ does not vanish. Hence $X\cap U$ is the union of the graphs $\{(x,y)\in U \mid y=s_i(x)\}$ of the power series $s_i$. Note that for each of those power series, we have
\begin{equation}\label{eq.-s'}
-s'_i(x)= \frac{f_x(x,s_i(x))}{f_y(x, s_i(x))}
\end{equation}
(where $s'_i$ denotes the derivative of $s_i$). Indeed, composing the map $(\mathrm{id}, s_i)\colon \CC\to \CC^2, x\mapsto (x,s_i(x))$ with $f$ gives the zero map (because $f$ is zero on the graph of $s_i$). Thus, the derivative of the composed function is zero; expressed using the chain rule, that derivative is equal to $f_x(x,s_i(x)) + f_y(x,s_i(x))\cdot s_i'(x)$, so we obtain \eqref{eq.-s'}.

Write $s_i(x)=\sum_{i\geqslant M} b_ix^i$ for $M\geqslant 1$ and $b_M\neq 0$. Then $s_i'(x)=\sum_{i\geqslant M} ib_ix^{i-1}$ and hence
\begin{align*}
\lim_{x\to 0} h(x,s_i(x)) & = \lim_{x\to 0}  \frac{s_i(x)f_y(x,s_i(x))}{xf_x(x,s_i(x))} \\ 
& \overset{\eqref{eq.-s'}}{=} \lim_{x\to 0}  -\frac{s_i(x)}{xs_i'(x)} 
= \lim_{x\to 0} -\frac{\sum_{i\geqslant M} b_ix^i}{\sum_{i\geqslant M} ib_ix^i} 
= -\frac{1}{M}\neq 0.  
\end{align*}
Applying this to each of the $s_i$ shows that
$(0,0,0)$ does not lie in the closure of $\operatorname{graph} (h)$ in the analytic topology. The closure of the graph in the Zariski topology is the same, so $(0,0,0)\notin V'$.   

\medskip

\emph{Part 2: Proving that $V'$ does not contain 
$(0,y_0,0)$, for $y_0 \in \CC^\times$.}

Consider the change of variables $y \mapsto y+y_0$, i.e., set $f^\#(x,y)=f(x,y+y_0)$. Note that 
\begin{align*}
h^\#(x, y) & = \frac{yf^\#_y(x, y)}{xf^\#_x(x, y)} = \frac{yf_y(x, y+y_0)}{xf_x(x, y+y_0)} \\
& = \frac{y}{y+y_0}h(x, y+y_0).
\end{align*}
By Case 1, $(0,0,0)$ does not belong to the closure of the graph of $h^\#$. Therefore $(0,y_0,0)$ does not belong to the closure of the graph of $h$. 
\end{proof}

\section{Proof of Theorem \ref{thm:main}}
\label{s.proof}

Suppose $f=\sum_{i=0}^n f_i(x)y^i$ is a reduced polynomial such that $f_0$ and $f_n$ are nonzero polynomials. We show  
\[
\hyperlink{A}{\text{(A)}}\Rightarrow \hyperlink{B}{\text{(B)}}\Rightarrow \hyperlink{C}{\text{(C)}}\Rightarrow \hyperlink{A}{\text{(A)}}. 
\]

$\hyperlink{A}{\text{(A)}}\Rightarrow \hyperlink{B}{\text{(B)}}$ : Suppose $f$ is quasi-homogeneous of type $(w;\alpha,\beta)$ with $\alpha\neq 0$. Without loss we may assume that $\alpha>0$ and that $\alpha$ and $\beta$ are co-prime. It is clear that $f_0$ and $f_n$ are monomials. To see that $\Disc_y(f)$ is a monomial, by Lemma \ref{lem:laurent}, we can write $f$ both as  
\[
cx^{k} y^{\ell}\prod_{i=1}^d(a_i-x^{-\beta} y^\alpha) \text{ and }  cx^{k'} y^{\ell}\prod_{i=1}^d(a_ix^\beta -y^\alpha)
\]
where one of the two expressions is a product of polynomials, and where the $a_i$ are non-zero. Suppose the former is a product  polynomials (so $\beta\leqslant0$), the other case being similar. Since $f$ is reduced we have that $0\leqslant k, \ell\leqslant 1$ and all $a_i$ must be different. Moreover, for every $e \in \CC^\times$, the equation $a_i-e^{-\beta}y^\alpha=0$ has no multiple roots. Therefore, also $f(e,y)$ has no multiple roots. 
Since $f_n$ is a monomial, we also have $f_n(e) \ne 0$, so we obtain (by Fact~\ref{fact}) that $(\Disc_y(f))(e) \ne 0$ for all $e \in \CC^\times$. In other words, the only possible root of $\Disc_y(f)$ is $0$, meaning that it is a monomial. 

\medskip

$\hyperlink{B}{\text{(B)}}\Rightarrow \hyperlink{C}{\text{(C)}}$ : Suppose \hyperlink{B}{\text{(B)}} holds but $\hat{f}$ and $y\hat{f}_y$ have a common zero $(\p[a:1],\p[b:
\tilde{b}])\in \CC^\times\times \PP^1$. Since $f_0$ is a monomial, we must have $b\neq 0$, as otherwise, $\hat f(a,1, 0,\tilde b) = f_0(a)\tilde b^n=0$ would imply that $a=0$. Similarly, $\tilde{b}\neq 0$, since $f_n$ is a monomial. Hence, without loss, $\tilde{b}=1$. Therefore, 
\[
\hat{f}(a,1,b,1)=f(a,b)=0\quad \text{and}\quad b\hat{f}_y(a,1,b,1)=bf_y(a,b)=0.
\]
Since $b \ne 0$, the latter implies $f_y(a,b) = 0$. By Fact \ref{fact}, we obtain $\Disc_y(f)(a)=0$, which implies that $a=0$ since $\Disc_y(f)$ is a monomial, a contradiction.  

\medskip

$\hyperlink{C}{\text{(C)}}\Rightarrow \hyperlink{A}{\text{(A)}}$ : We first reduce to the case of irreducible polynomials. 

\begin{claim}\label{claim:irreducible}
It suffices to prove $\hyperlink{C}{\text{(C)}}\Rightarrow \hyperlink{A}{\text{(A)}}$ when $f$ is irreducible.     
\end{claim}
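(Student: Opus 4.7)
The plan is to factor $f=g_1\cdots g_m$ into distinct irreducible factors (using that $f$ is reduced), transfer condition \hyperlink{C}{(C)} to each $g_k$, invoke the irreducible case of the theorem to conclude each $g_k$ is quasi-homogeneous with nonzero $x$-weight, and finally argue that the types must align so that the product $f=\prod_k g_k$ is itself quasi-homogeneous of nonzero $x$-weight.

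First I would verify that each $g_k$ inherits the hypotheses of Theorem~\ref{thm:main}: the $y^{n_k}$-coefficient is nonzero by definition of $n_k=\deg_y g_k$, and if the $y^0$-coefficient of $g_k$ vanished then $y\mid g_k$, forcing $g_k=y$ by irreducibility, so that $y\mid f$ and $f_0=0$, contradicting the hypothesis. Next I would show that \hyperlink{C}{(C)} descends to each $g_k$. By Remark~\ref{rem:homog}, $\hat f=\prod_i \hat g_i$, so at any point $P$ where $\hat g_k(P)=0$, every term of $\hat f_y(P)=\sum_i (\hat g_i)_y(P)\prod_{l\neq i}\hat g_l(P)$ except the $i=k$ one carries a factor $\hat g_k(P)$; this yields $y\hat f_y(P)=\bigl(y(\hat g_k)_y(P)\bigr)\cdot\prod_{l\neq k}\hat g_l(P)$. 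Hence a common zero of $\hat g_k$ and $y(\hat g_k)_y$ in $\CC^\times\times\PP^1$ would also be a common zero of $\hat f$ and $y\hat f_y$ there, contradicting \hyperlink{C}{(C)} for $f$.

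Applying the irreducible case to each $g_k$, I obtain primitive type vectors $(\alpha_k,\beta_k)$ with $\alpha_k>0$ and $\gcd(\alpha_k,\beta_k)=1$, and weights $w_k$, such that $g_k$ is quasi-homogeneous of type $(w_k;\alpha_k,\beta_k)$. The crux is then showing that all non-monomial $g_k$ share a single primitive vector. Suppose, for contradiction, two non-monomial factors $g_j,g_k$ had distinct primitive vectors. The zero set of each such $g_i$ in $(\CC^\times)^2$ is nonempty and invariant under the $\CC^\times$-action $t\cdot(x,y)=(t^{\alpha_i}x,t^{\beta_i}y)$; by irreducibility it is a single coset of the one-parameter subgroup $H_i=\{(t^{\alpha_i},t^{\beta_i}):t\in\CC^\times\}$. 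Since $(\alpha_j,\beta_j)\neq(\alpha_k,\beta_k)$ are both primitive, the only character $(x,y)\mapsto x^a y^b$ trivial on both $H_j$ and $H_k$ is the trivial one, so $H_jH_k=(\CC^\times)^2$ and any coset of $H_j$ meets any coset of $H_k$, yielding a common zero $P\in\CC^\times\times\CC^\times$ of $g_j$ and $g_k$. At $P$ every term of $\hat f_y$ carries a factor of $\hat g_j$ or $\hat g_k$, so $\hat f(P)=\hat f_y(P)=0$, contradicting \hyperlink{C}{(C)} for $f$. Since monomial irreducible factors of $f$ can only be $g_k=x$ (as $y\nmid f$), and $x$ is quasi-homogeneous of every type $(\alpha;\alpha,\beta)$, choosing $(\alpha,\beta)$ to be the common primitive vector of the non-monomial factors (or any vector with $\alpha\neq 0$ if no such factors exist) lets me write each $g_k$ as quasi-homogeneous of type $(w_k;\alpha,\beta)$ for some $w_k$, whence $f$ is quasi-homogeneous of type $(\sum_k w_k;\alpha,\beta)$ with $\alpha\neq 0$.

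The main obstacle is the alignment step: the descent of \hyperlink{C}{(C)} to the factors is a routine derivative-of-a-product computation, but forcing two irreducible non-monomial quasi-homogeneous polynomials with distinct primitive type vectors to share a zero in $(\CC^\times)^2$ requires the torus-group argument above (a closed-form substitute for the Bernstein--Kouchnirenko reasoning the introduction warns does not apply directly), together with care to handle the monomial factor $x$ (whose type is unconstrained).
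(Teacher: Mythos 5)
Your proposal is correct, and its overall skeleton matches the paper's: factor $f$ into irreducible factors, check that each factor inherits the hypotheses ($g_{k,0}\neq 0$ since $y\nmid f$) and condition \hyperlink{C}{(C)} via the product rule $y\hat f_y=\sum_i\bigl(y(\hat g_i)_y\bigr)\prod_{l\neq i}\hat g_l$, apply the irreducible case, and then derive a contradiction from mismatched weight vectors by producing a common zero of two factors in $(\CC^\times)^2$. The genuine difference is in that last, crucial step. The paper invokes Lemma~\ref{lem:laurent} to write the two factors as products of binomials $a_ix^\beta-y^\alpha$ and $b_jx^\delta-y^\gamma$ and then constructs a common root explicitly, via a choice of radicals, roots of unity and a coprimality/modular-arithmetic computation; you instead argue torically: the zero set of each non-monomial quasi-homogeneous factor in $(\CC^\times)^2$ is nonempty and invariant under the one-parameter subtorus $H_i=\{(t^{\alpha_i},t^{\beta_i})\}$, and two distinct primitive subtori generate $(\CC^\times)^2$ (no common nontrivial character), so the relevant cosets must meet. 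Your route is more conceptual and dispenses with the explicit root-of-unity bookkeeping and the reduction to coprime $\alpha,\gamma$; the paper's is completely elementary and self-contained, needing no facts about subtori. Two of your assertions deserve a line of justification: nonemptiness of $Z(g_i)\cap(\CC^\times)^2$ (clear, since an irreducible non-monomial $g_i$ is divisible by neither $x$ nor $y$), and the statement that any coset of $H_j$ meets any coset of $H_k$, which follows because the map $(t,s)\mapsto(t^{\alpha_j}s^{\alpha_k},t^{\beta_j}s^{\beta_k})$ is surjective when $\alpha_j\beta_k-\alpha_k\beta_j\neq 0$; note also that for this you only need invariance plus nonemptiness of the two zero sets, so your stronger ``single coset by irreducibility'' claim (true, but requiring closedness of the cosets) can be bypassed. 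You are, if anything, more careful than the paper about the monomial factor $x$ and about the descent of the hypotheses $f_0,f_n\neq 0$ to the factors.
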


\begin{proof}
Let $f$ be a polynomial satisfying $\hyperlink{C}{\text{(C)}}$, that is, $\hat{f}$ and $y\hat{f}_y$ have no common root in $\CC^\times\times\PP^1$. Suppose further that $f=gh$ and that the implication $\hyperlink{C}{\text{(C)}}\Rightarrow\hyperlink{A}{\text{(A)}}$ holds for $g$ and $h$. We show that $f$ is quasi-homogeneous with non-zero weight of $x$. By Remark \ref{rem:homog}, we have that $\hat{f}=\hat{g}\hat{h}$. Moreover, the usual derivation rules imply 
\[
y\hat{f}_y = \hat{g}(y\hat{h}_y)+\hat{h}(y\hat{g}_y).
\]
This shows that $\hat{g}$ and $y\hat{g}_y$ (resp. $\hat{h}$ and $y\hat{h}_y$) have no common root in $\CC^\times\times\PP^1$ as otherwise $\hat{f}$ and $y\hat{f}_y$ would have one. Therefore, by assumption, both $g$ and $h$ are quasi-homogeneous with non-zero weight of $x$.
%{\color{green}; without loss of generality these $x$-weights can be chosen to be positive}
If either $g$ or $h$ is a monomial, it is easy to see that $f$ is quasi-homogeneous, so we are done.
%and the weight of $x$ can be chosen positive.
So suppose $g$ and $h$ are not monomials. In order to deduce $\hyperlink{A}{\text{(A)}}$ for $f$, it suffices to verify that $g$ and $h$ have the same weights (up to some factor), so suppose otherwise. Using Lemma \ref{lem:laurent}, write 
\[
g=cx^{k} y^{\ell} \prod_i (a_ix^\beta - y^\alpha) \text{ and } h=dx^{k'} y^{\ell'} \prod_j (b_jx^\delta - y^\gamma)
\]
for integers $k,\ell,k',\ell', \alpha,\beta, \gamma$ and $\delta$ with $\alpha \ne 0, \gamma \ne 0$
and $c,d, a_i, b_j\in \CC^\times$, and where neither of the products over $i$ and $j$ are empty. The weight difference implies that $\beta/\alpha\neq\delta/\gamma$.

Let $a$ be any of the $a_i$ and let $b$ be any of the $b_j$. We will find a common zero $(x_0,y_0) \in \CC^\times \times \CC^\times$ of the Laurent polynomials $ax^\beta - y^\alpha$ and $bx^\delta - y^\gamma$, which is hence a common zero of $g$ and $h$. Therefore, $(\p[x_0:1],\p[y_0:1])$ is a common root of $\hat{f}$ and $y\hat{f}_y$, contradicting the assumption.

In seeking a common root of the factors above, we may suppose that $(\alpha, \gamma) = 1$ (that is, they are coprime), if necessary via a change of variables $t=y^{(\alpha,\gamma)}$.
Now let $x_0$ be any $(\delta\alpha - \beta\gamma)$th root of $\frac{a^\gamma}{b^\alpha}$.
This implies
\[
a^\gamma x_0^{\beta\gamma} = b^\alpha x_0^{\delta\alpha} \eqqcolon w.  
\]
We need to find a $y_0$ such that $y_0^\alpha = ax_0^\beta$ (which is a $\gamma$th root of $w$) and $y_0^\gamma = bx_0^\delta$ (which is an $\alpha$th root of $w$).
Let $z_0$ be a fixed $(\alpha\gamma)$th root of $w$
and let  $\zeta$ be a primitive $|\alpha\gamma|$th root of unity. Then we have 
\[
ax_0^\beta = z_0^\alpha\zeta^{i\alpha}  \text{ and } bx_0^\delta = z_0^\gamma\zeta^{j\delta} 
\]
for some integers $i$ and $j$. If we set $y_0=z_0\zeta^k$ for some integer $k$, then our two conditions on $y_0$ become
\[
z_0^\alpha\zeta^{k\alpha}=z_0^\alpha\zeta^{i\alpha} \text{ and } z_0^\gamma\zeta^{k\gamma}=z_0^\gamma\zeta^{j\delta}.  
\]
This corresponds to the modular equations 
\[
k\alpha\equiv i\alpha  \mod \alpha\gamma \quad\text{and}\quad k\gamma\equiv j\gamma \mod \alpha\gamma 
\]
which have a common solution since $\alpha$ and $\gamma$ are coprime. 
\end{proof}

To show $\hyperlink{C}{\text{(C)}}\Rightarrow\hyperlink{A}{\text{(A)}}$ we will prove its contrapositive, so assume the negation of $\hyperlink{A}{\text{(A)}}$, that is, either $f$ is
not quasi-homogeneous, or it is quasi-homogeneous only using $\alpha=0$ as the weight of $x$. The latter means that
$f$ is a polynomial in $x$ only (since we assumed $f_0 \ne 0$) but not a monomial. Then $f$ has a root $(a, 0)$ for some $a \in \CC^\times$, and since $y \hat f_y$ is the zero-polynomial,
$(\p[a:1],\p[0:1])$ is a common root of $\hat f$ and $y \hat f_y$, contradicting $\hyperlink{C}{\text{(C)}}$.

We are left with the case where $f$ is not quasi-homogeneous.
We let $\hat f$ be the multi-homogenization of $f$ and use all the notation from Assumption~\ref{ass.f}.

\begin{claim}\label{claim:znotconst1}
The function $h\colon X_0 \to \PP^1$ is not constant. 
\end{claim}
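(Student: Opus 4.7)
I would argue by contraposition: assuming $h$ is constant on $X_0$, I will show that $f$ is quasi-homogeneous, contradicting the standing assumption (recall that by Claim~\ref{claim:irreducible} we have reduced to the case where $f$ is irreducible, and the case $f \in \CC[x]$ has already been dispatched, so $f$ is assumed not quasi-homogeneous). Since by construction $h$ takes values in $\CC \subset \PP^1$, any constant value must be some $c \in \CC$. Passing to the affine chart $\x = \y = 1$ and writing $f(x,y) \coloneqq \hat f(x,1,y,1)$, the identity $h \equiv c$ on $X_0$ translates to the vanishing of the polynomial $yf_y(x,y) - c \cdot xf_x(x,y)$ on the Zariski-dense open subset $X_0 \cap \CC^2$ of the affine curve $V(f) \subset \CC^2$, and therefore on all of $V(f)$.

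The key observation is then a monomial-support comparison: each monomial $c_{ij}x^iy^j$ of $f$ contributes $(j - ci)c_{ij}x^iy^j$ to $yf_y - c \cdot xf_x$, so the monomial support of $yf_y - c \cdot xf_x$ is contained in that of $f$. Combined with the irreducibility of $f$ (Assumption~\ref{ass.f} and Remark~\ref{rem:homog}) and the Nullstellensatz, this forces $f \mid yf_y - c \cdot xf_x$ with a \emph{constant} quotient $\lambda \in \CC$. The resulting identity $\lambda f = yf_y - c \cdot xf_x$ is exactly of the shape required by Lemma~\ref{lem:wh1}, with the nonzero triple $(w,\alpha,\beta) = (\lambda, -c, 1)$. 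Hence $f$ is quasi-homogeneous, producing the desired contradiction.

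One subtlety that needs to be addressed en route is that $X_0 \cap \CC^2$ is indeed Zariski-dense in $V(f)$, equivalently that $xf_x$ does not vanish identically on $V(f)$. I would dispose of this by the very same support argument: if $f \mid xf_x$, then support containment forces $xf_x = \mu f$ for some $\mu \in \CC$, and Lemma~\ref{lem:wh1} applied to $(\mu, 1, 0)$ would already make $f$ quasi-homogeneous, again contradicting the standing hypothesis. I expect the main obstacle to be purely bookkeeping: ensuring the projective-to-affine transition and the "constant quotient" step are carried out cleanly; both reduce to the same elementary monomial computation.
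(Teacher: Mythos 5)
Your proof is correct and follows essentially the same route as the paper: constancy of $h$ yields, via density of $X_0$ in the curve and irreducibility of $f$, an identity $\lambda f = yf_y - c\,xf_x$ with constant $\lambda$, and Lemma~\ref{lem:wh1} then gives quasi-homogeneity, contradicting the standing assumption. The only difference is cosmetic: you run the divisibility and density steps in the affine chart (Nullstellensatz plus monomial-support comparison), whereas the paper works with $\hat f$ projectively, citing Lemma~\ref{lem:zarclo1} and using degree reasons before dehomogenizing to apply Lemma~\ref{lem:wh1}.
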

\begin{proof}
Suppose $h$ is constant. Then
\[
y\hat{f}_y(x,\x,y,\y) = x\hat{f}_x(x,\x,y,\y)\cdot c
\] 
for some constant $c \in \CC$, for all $(\p[x:\x],\p[y:\y]) \in X_0$.
Since this polynomial equality holds on $X_0$, it also holds on the Zariski closure of $X_0$, which is $X$ by Lemma \ref{lem:zarclo1}. Thus, the polynomial $y\hat{f}_y- x\hat{f}_x\cdot c \in \CC[x,\x,y,\y]$ is a multiple of $\hat{f}$, i.e., 
\[
y\hat{f}_y - x\hat{f}_x\cdot c = g\hat{f}
\]
for some $g \in \CC[x,\x,y,\y]$. For degree reasons, $g$ is constant equal to some $d \in \CC$. Setting the variables $\x=\y=1$, we obtain $yf_y - xf_x\cdot c = df$. Now Lemma \ref{lem:wh1} implies that $f$ is quasi-homogeneous, contradicting our assumption. 
\end{proof}

\begin{claim}\label{claim:surjectivity1}
Let $\pi_3\colon (\PP^1)^3\to \PP^1$ be the projection  onto the third coordinate. Then $\pi_3|_{V'}$ is surjective.
\end{claim}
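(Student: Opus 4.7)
The plan is to argue by irreducibility: I will show $\pi_3(V')$ is an irreducible closed subset of $\PP^1$, hence either a single point or all of $\PP^1$, and then rule out the first option using Claim~\ref{claim:znotconst1}.

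First I would check that $V'$ is irreducible. Since $X$ is irreducible (Assumption~\ref{ass.f}) and $X_0 \subset X$ is a non-empty Zariski open subset (its complement in $X$ is cut out by $x\hat f_x$, and by Lemma~\ref{lem:zarclo1} we have $X_0^{\mathrm{Zar}} = X$), $X_0$ is irreducible. The graph of $h$ is the image of $X_0$ under the morphism $(\mathrm{id}, h)\colon X_0 \to X_0 \times \PP^1$, hence irreducible; therefore its Zariski closure $V'$ is also irreducible.

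Next, since $V' \subset (\PP^1)^3$ is a closed subvariety of a projective variety and $\pi_3$ is a morphism of projective varieties, $\pi_3(V')$ is closed in $\PP^1$. Being the image of an irreducible variety under a morphism, it is irreducible. The only irreducible closed subsets of $\PP^1$ are the points and all of $\PP^1$, so either $\pi_3(V') = \PP^1$ (which is the desired conclusion) or $\pi_3(V') = \{p\}$ for a single point $p \in \PP^1$.

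To rule out the second case: if $\pi_3(V')$ were a single point $p$, then in particular $\pi_3(\mathrm{graph}(h)) = h(X_0) \subset \{p\}$, so $h$ would be constant on $X_0$, contradicting Claim~\ref{claim:znotconst1}. Hence $\pi_3(V') = \PP^1$, which is what we wanted. I do not expect any serious obstacle here; the only subtlety is verifying that the graph is genuinely irreducible and that projection from a projective variety preserves closedness and irreducibility, both of which are standard.
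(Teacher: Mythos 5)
Your argument is correct and follows essentially the same route as the paper: project $V'$ by the proper map $\pi_3$, note the image is closed in $\PP^1$, and use irreducibility of $V'$ plus Claim~\ref{claim:znotconst1} to exclude the case of a single point. The only difference is cosmetic --- you spell out why $V'$ is irreducible (via the graph of $h$ over the irreducible open set $X_0$), a fact the paper invokes implicitly --- and that verification is a welcome addition rather than a deviation.
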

\begin{proof}
Since $\pi_3$ is a proper morphism, it is a closed map. Hence the image of $V'$ is closed. But in $\PP^1$, a closed set is either finite or the whole $\PP^1$. If the image is finite, it is a singleton (by irreducibility of $V'$). But then $h$ would be constant, contradicting Claim~\ref{claim:znotconst1}.
\end{proof}

By Claim~\ref{claim:surjectivity1}, there exists a $(\p[x_0:\x_0],\p[y_0: \y_0])\in X$ such that $(\p[x_0:\x_0],\p[y_0: \y_0],\p[0:1])\in V'$. By Lemma~\ref{lem:V'1}, we have $\p[x_0:\x_0] \notin \{\p[0:1], \p[1:0]\}$. Therefore $(\p[x_0:\x_0],\p[y_0: \y_0])\in \CC^\times\times\PP^1$ is a root of $\hat{f}$ (by definition of $X$) and a root of $y\hat{f}_y$ by definition of $h$, completing the proof.

\section{Final remarks}
\label{sec:final}

We show in this section how Theorem \ref{thm:main} can be extended to arbitrary fields of characteristic 0 and, for a fixed degree $d$, over fields of characteristic $p$ for large $p$ depending on $d$. This is the content of the following  theorem. 

\begin{thm}\label{thm:main2} Let $K$ be a field of characteristic $p\geqslant 0$.
\begin{enumerate}
    \item If $p=0$, Theorem \ref{thm:main} holds over $K$. 
    \item For every $d\in\NN$ there is $N_d\in \NN$ such that if $p>N_d$, Theorem \ref{thm:main} holds over $K$ for all geometrically reduced polynomials of degree smaller or equal than $d$.  
\end{enumerate}
\end{thm}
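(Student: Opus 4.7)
The plan is two-pronged. For part (1), I would extend the $\CC$-argument of Section~\ref{s.proof} to an arbitrary algebraically closed field of characteristic $0$ and then descend to a general field of characteristic $0$. For part (2), I would invoke a model-theoretic (Lefschetz) transfer from characteristic $0$ to large positive characteristic.

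\emph{Part (1).} I would first check that the proofs in Section~\ref{s.proof} and in the preceding section rest on three ingredients: elementary algebra in characteristic $0$ (where no non-zero integer inverts to $0$), scheme-theoretic facts about sub-varieties of $(\PP^1)^n$ (Zariski closures, irreducibility, properness), and Puiseux's theorem for power series. All three are available over any algebraically closed field of characteristic $0$; the only step that needs rephrasing is the appeal to the analytic closure at the end of Lemma~\ref{lem:V'1}, which I would replace by the valuative criterion: each Puiseux branch yields a morphism $\mathrm{Spec}\, K[\![t]\!] \to V'$ whose closed point maps to $(0,0,-1/M) \neq (0,0,0)$, and this suffices to rule out $(0,0,0) \in V'$. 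Descending to an arbitrary field $K$ of characteristic $0$ is then a matter of observing that each of (A), (B), (C) is insensitive to the extension $K \hookrightarrow \bar K$: (A) depends only on the support of $f$; (B) concerns coefficient polynomials in $K[x]$, whose being monomials is invariant under extension of the base field; and (C) is a statement about the zero locus over $\bar K$ of an ideal defined over $K$.

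\emph{Part (2).} Fix $d \in \NN$. I would encode the statement ``Theorem~\ref{thm:main} holds for every geometrically reduced $f$ of degree $\le d$'' as a single first-order sentence $\varphi_d$ in the language of rings whose parameters are the coefficients of $f$. For bounded $d$, each ingredient is definable: geometric reducedness is first-order (having a repeated factor of degree $\le d$ can be expressed via a finite disjunction over such factorizations, or via a gcd/resultant condition); (A) is a combinatorial collinearity condition on the support; (B) is a finite system of polynomial equations on the coefficients, since those of $\Disc_y(f)$ are polynomial in the coefficients of $f$ with bounded complexity; and (C), via an effective projective Nullstellensatz, reduces to the linear-algebraic condition that $(x\x)^N$ lie in $(\hat f, y\hat f_y)$ for some $N$ depending only on $d$. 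By Part (1), $\varphi_d$ holds in every algebraically closed field of characteristic $0$; hence by the compactness theorem (or Ax's theorem) there is some $N_d \in \NN$ such that $\varphi_d$ holds in every algebraically closed field of characteristic $p > N_d$. A general $K$ of characteristic $p > N_d$ is then handled by passage to $\bar K$ exactly as in Part (1).

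The main obstacle I anticipate is the bookkeeping required to produce first-order encodings of (C) and of geometric reducedness whose length is bounded in terms of $d$ alone; this in turn hinges on effective degree bounds in the Nullstellensatz and on the standard fact that, with the degree of a putative repeated factor bounded, the existence of one is definable. Once these encodings are in place, the model-theoretic transfer is a routine application of completeness of $\mathrm{ACF}_p$ for each $p$.
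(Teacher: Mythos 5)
Your overall plan is sound, and for part (2) it coincides with the paper's proof: encode the statement for degree $\leqslant d$ as a first-order sentence, transfer from $\mathrm{ACF}_0$ to $\mathrm{ACF}_p$ for $p$ large via compactness/Lefschetz, and then descend to an arbitrary field (your descent via $K\hookrightarrow \bar K$, versus the paper's embedding of $\FF_p(c)$ into one big algebraically closed field, is an inessential variation; also, your detour through an effective Nullstellensatz for (C) is unnecessary, since over an algebraically closed field the non-existence of a common zero in $\CC^\times\times\PP^1$ is directly expressible by quantifying over coordinates). Where you genuinely diverge is part (1): you propose to re-run the whole argument of Section~\ref{s.proof} over an arbitrary algebraically closed field of characteristic $0$, which forces you to algebraize the one transcendental step, the analytic-closure argument in Lemma~\ref{lem:V'1}. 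The paper avoids this entirely by a cheaper specialization trick: the coefficients of $f$ generate a subfield $\QQ(c)$ of finite transcendence degree, which embeds into $\CC$, and conditions (A), (B), (C) and reducedness are insensitive to this embedding, so the complex case already proved applies verbatim; no re-examination of the proof is needed. (A second clean alternative, closer in spirit to your part (2), would be to transfer your sentence $\varphi_d$ from $\CC$ to any model of $\mathrm{ACF}_0$ by completeness of that theory.) Your route is workable but heavier, and as stated your valuative substitute has a small logical gap: exhibiting, for each Puiseux branch, an arc $\mathrm{Spec}\,K[\![t]\!]\to V'$ whose closed point is $(0,0,-1/M)$ does not by itself exclude $(0,0,0)\in V'$; you must also argue that \emph{every} point of $V'$ above $(0,0)$ arises as the closed point of such an arc (e.g.\ using that $\pi_{12}|_{V'}\colon V'\to X$ is a finite map of projective curves, so the points of $V'$ over $(0,0)$ correspond to branches of $V'$, which map to Puiseux branches of $X$ at $(0,0)$, along which $h$ has residue $-1/M\neq 0$). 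With that converse supplied, your part (1) goes through, but the paper's embedding argument buys the same conclusion with essentially no additional verification.
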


\begin{proof}
For (1), let $K$ be a field of characteristic 0 and $f\in K[x,y]$ be a bivariate polynomial. Let $c=(c_{i,j})$ be the coefficients of $f$. Then $\QQ(c)$ is an extension of $\QQ$ of finite transcendence degree. Let $\varphi\colon \QQ(c)\to \CC$ be an embedding and $f^\varphi\in\CC[x,y]$ be the image of $f$ under $\varphi$. Since the result holds for $f^\varphi$, it is not difficult to see that the result holds for $f$. Indeed, note that all the conditions in the theorem hold for $f$ (i.e., being quasi-homogeneous, reduced, etc.) if and only if they hold for $f^\varphi$. Note also that since we are in characteristic $0$, being reduced is equivalent to being geometrically reduced. 

\medskip For (2), let us first show the statement when $K$ is algebraically closed. This follows by noting that for all polynomials of degree smaller or equal than $d$, Theorem \ref{thm:main} can be expressed by a first-order sentence in the language of rings. Thus, by the transfer principle of algebraically closed fields (see \cite[Corollary B.12.4]{transseries}), there is $N_d\in \NN$ such that the same statement holds for all algebraically closed fields of characteristic $p>N_d$. Now, to conclude for all fields of characteristic $p>N_d$, fix first an algebraically closed field $F$ of characteristic $p>N_d$ with infinite transcendence degree over $\mathbb{F}_p$. Let $K$ be any field of characteristic $p$ and $f\in K[x,y]$ be a bivariate polynomial which is geometrically reduced. Let $c=(c_{i,j})$ be the coefficients of $f$. Then $\mathbb{F}_p(c)$ is an extension of $\mathbb{F}_p$ of finite transcendence degree. Letting $\varphi\colon \mathbb{F}_p(c)\to F$ be an embedding and $f^\varphi\in F[x,y]$ be the image of $f$ under $\varphi$ we conclude as in case (1), noting that $f^\varphi$ is reduced. 
\end{proof}

We finish by asking the following questions: 

\begin{qu}
Does Theorem \ref{thm:main} hold for geometrically reduced polynomials over all fields of positive characteristic?
\end{qu}

\begin{qu}
Is there a suitable analogue of Theorem \ref{thm:main} in higher dimension (i.e., for curves in $\CC^n$ or for hypersurfaces in $\CC^n$, or maybe for arbitrary varieties in $\CC^n$)?
\end{qu}

\bibliographystyle{amsplain}
\bibliography{references}

\providecommand{\bysame}{\leavevmode\hbox to3em{\hrulefill}\thinspace}
\providecommand{\MR}{\relax\ifhmode\unskip\space\fi MR }
% \MRhref is called by the amsart/book/proc definition of \MR.
\providecommand{\MRhref}[2]{%
  \href{http://www.ams.org/mathscinet-getitem?mr=#1}{#2}
}
\providecommand{\href}[2]{#2}
\begin{thebibliography}{1}

\bibitem{transseries}
Matthias Aschenbrenner, Lou van~den Dries, and Joris van~der Hoeven, \emph{Asymptotic differential algebra and model theory of transseries}, Annals of Mathematics Studies, vol. 195, Princeton University Press, Princeton, NJ, 2017. \MR{3585498}

\bibitem{casas-alvero_2000}
Eduardo Casas-Alvero, \emph{Singularities of plane curves}, London Mathematical Society Lecture Note Series, Cambridge University Press, 2000.

\bibitem{gel-kapra-zelev}
I.~M. Gelfand, M.~M. Kapranov, and A.~V. Zelevinsky, \emph{Discriminants, resultants and multidimensional determinants}, Modern Birkh\"{a}user Classics, Birkh\"{a}user Boston, Inc., Boston, MA, 2008, Reprint of the 1994 edition. \MR{2394437}

\bibitem{Kouchnirenko1976}
A.G. Kouchnirenko, \emph{Polyèdres de {N}ewton et nombres de {M}ilnor}, Inventiones mathematicae \textbf{32} (1976), 1--32.

\bibitem{kunz}
Ernst Kunz, \emph{Introduction to commutative algebra and algebraic geometry}, Modern Birkh\"{a}user Classics, Birkh\"{a}user/Springer, New York, 2013, Translated from the 1980 German original by Michael Ackerman, With a preface by David Mumford, Reprint of the 1985 edition.

\end{thebibliography}

\end{document}